\documentclass[12pt]{amsart}
\usepackage{amsmath,mathrsfs,amsthm, xypic,verbatim,amscd,color}
\usepackage{txfonts}
\usepackage{graphicx}
\usepackage{colordvi}
\usepackage{times}

\newtheorem{theorem}{Theorem}
\newtheorem{question}[theorem]{Question}
\newtheorem{conjecture}[theorem]{Conjecture}
\newtheorem{proposition}[theorem]{Proposition}
\newtheorem{corollary}[theorem]{Corollary}
\newtheorem{definition}[theorem]{Definition}
\newtheorem{lemma}[theorem]{Lemma}
\newtheorem{remark}[theorem]{Remark}
\newtheoremstyle{remarks}{10pt}{10pt}{ }%
{}{\bfseries}{.}{ }{}
\theoremstyle{remarks}

\DeclareMathOperator{\End}{End}

\newcommand{\gra}{\alpha}        
  
 \newcommand{\grl}{\lambda}     
      
 \newcommand{\mR}{\mathbb R} 

\newcommand{\lra}      {\longrightarrow}
\newcommand{\senza}    {\smallsetminus}

            \newcommand{\st}       {\, : \,}

         \newcommand{\mand}     {\text{ and }}
  \newcommand{\mforall}  {\text{ for all }}

\font\bigsymb=cmsy10 at 4pt
\def\bigdot{{\kern1.2pt\raise 1.5pt\hbox{\bigsymb\char15}}}

\usepackage{float}
\floatstyle{plain}
\floatplacement{Figure}{H}
\newcommand{\g}{\mathfrak{g}}
\def\fb{{\mathfrak b}}
\def\ft{{\mathfrak t}}
\newcommand\GL{\operatorname{GL}}
\begin{document}

\title{Components of $V(\rho)\otimes V(\rho)$}

\author[Shrawan Kumar]{Shrawan Kumar \\
(With an Appendix by Rocco Chiriv\`\i\ and  Andrea Maffei)}

\maketitle
\section{Introduction}
Let $\mathfrak{g}$ be any simple Lie algebra over $\mathbb{C}$.
We fix a Borel subalgebra  $\mathfrak{b}$ and a Cartan subalgebra $\mathfrak{t} \subset \mathfrak{b}$  and let $\rho$ be the half sum of positive roots, where the roots of $\mathfrak{b}$ are called the positive roots. For any dominant integral weight $\lambda \in \mathfrak{t}^*$, let $V(\lambda)$ be the corresponding irreducible representation of $\mathfrak{g}$. B.  Kostant  initiated (and popularized) the study of the irreducible components of the tensor product  $ V(\rho)\otimes V(\rho)$. In fact, he asked (or possibly even conjectured) if the following is true.

\begin{question} (Kostant)
Let $\lambda$ be a dominant integral weight. Then, $V(\lambda)$ is a component of  $ V(\rho)\otimes V(\rho)$ if and only if 
$\lambda \leq 2\rho$ under the usual Bruhat-Chevalley order on the set of weights. 
\end{question}

It is, of course, clear that if  $V(\lambda)$ is a component of  $ V(\rho)\otimes V(\rho)$, then
$\lambda \leq 2\rho$.
 
One of  the main motivations  behind Kostant's question was his result that the exterior algebra $\wedge \mathfrak{g}$, as a $\mathfrak{g}$-module under the adjoint action, is isomorphic with $2^r$ copies of $ V(\rho)\otimes V(\rho)$, where $r$ is the rank of $\mathfrak{g}$ (cf. [Ko]). Recall that 
$\wedge \mathfrak{g}$ is the underlying space of the standard chain complex computing the homology of the Lie algebra $\mathfrak{g}$, which is, of course, an object of immense interest. 

\begin{definition} {\em An integer $d\geq 1$ is called a {\it saturation factor} for $\mathfrak{g}$,
 if for any $(\lambda,\mu,\nu)\in D^3$ such that $\lambda+\mu+\nu $ is in the root lattice and the space of $\mathfrak{g}$-invariants:
$$ [V(N\lambda)\otimes V(N\mu)\otimes 
  V(N\nu)]^{\mathfrak{g}}\neq 0$$ for some integer $N>0$, then
$$ [V(d\lambda)\otimes V(d\mu)\otimes 
  V(d\nu)]^{\mathfrak{g}}\neq 0,$$
where $D\subset \mathfrak{t}^*$ is the set of dominant integral weights of $\mathfrak{g}$. 
Such a $d$ always exists (cf. [Ku; Corollary 44]).}
\end{definition}
Recall that $1$ is a saturation factor for $\mathfrak{g}=sl_{n}$,  as proved by Knutson-Tao [KT]. By results of Belkale-Kumar [BK$_2$] (also obtained by Sam
[S]) and Hong-Shen [HS], $d$ can be taken to be  $2$ for $\mathfrak{g}$ of types  $B_r, C_r$ and  $d$ can be taken to be  $4$ for $\mathfrak{g}$ of type $D_r$  by a result of Sam [S]. As proved by Kapovich-Millson [KM$_1$, KM$_2$], the saturation factors $d$ of $\mathfrak{g}$ of types $G_2, F_4, E_6, E_7, E_8$ can be taken to be $2 \,(\text{in fact any} \,d\geq 2), 144, 36, 144, 3600$ respectively. (For a discussion of saturation factors $d$, see [Ku, $\S$10].)

Now,   the following (weaker) result is our main theorem. The proof uses 
a description of the eigencone of $\mathfrak{g}$ in terms of certain 
inequalities due to Berenstein-Sjamaar coming from the cohomology of the 
flag varieties associated to $\mathfrak{g}$, a `non-negativity'  result 
due to Belkale-Kumar  and Proposition (9) due to R. Chiriv\`\i\ and  A. 
Maffei given in the Appendix.

An interesting aspect of our work is that we make an essential use of a solution of the eigenvalue problem and  saturation results  for any $\mathfrak{g}$.

\begin{theorem}
Let $\lambda$ be a dominant integral weight such that $\lambda\leq 2\rho$. Then, $V(d\lambda)\subset V(d\rho)\otimes V(d\rho)$, where $d\geq 1$ is any saturation factor for $\mathfrak{g}$.

In particular, for $\mathfrak{g}=sl_{n}$, $V(\lambda)\subset V(\rho)\otimes V(\rho)$.
\end{theorem}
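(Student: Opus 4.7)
Following the hints in the introduction, I would attack this by showing that the triple $(\lambda^{*},\rho,\rho)$ lies in the saturated tensor cone (the ``eigencone'') of $\mathfrak{g}$, where $\lambda^{*}:=-w_{0}\lambda$ is the highest weight of $V(\lambda)^{*}$. The desired inclusion $V(d\lambda)\subset V(d\rho)\otimes V(d\rho)$ is equivalent to $[V(d\lambda^{*})\otimes V(d\rho)\otimes V(d\rho)]^{\mathfrak{g}}\neq 0$, and by the defining property of the saturation factor $d$ it suffices to exhibit some $N>0$ with $[V(N\lambda^{*})\otimes V(N\rho)\otimes V(N\rho)]^{\mathfrak{g}}\neq 0$, provided $\lambda^{*}+2\rho$ lies in the root lattice $Q$. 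This last hypothesis is immediate: $\lambda\leq 2\rho$ puts $\lambda$ in $2\rho+Q$, and $2\rho$, being the sum of all positive roots, is already in $Q$, so $\lambda^{*}\in Q$ as well.

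To produce such an $N$, I would use the Berenstein--Sjamaar description of the eigencone (refined by Belkale--Kumar): a triple of dominant weights lies in the eigencone precisely when, for every standard maximal parabolic $P$ with corresponding fundamental coweight $x_{P}$, and for every triple $(w_{1},w_{2},w_{3})\in (W^{P})^{3}$ whose Belkale--Kumar deformed product of Schubert classes equals the point class in $H^{*}(G/P)$, one has
$$\langle \lambda^{*},w_{1}x_{P}\rangle+\langle \rho,w_{2}x_{P}\rangle+\langle \rho,w_{3}x_{P}\rangle\leq 0.$$
The triple $(2\rho,\rho,\rho)$ already lies in the eigencone, since $V(2N\rho)\subset V(N\rho)\otimes V(N\rho)$ comes for free from the tensor product of the two highest-weight vectors. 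Hence the inequality above holds with $\lambda^{*}$ replaced by $2\rho$, and the inequality I need is equivalent to
$$\langle 2\rho-\lambda^{*},w_{1}x_{P}\rangle\geq 0.$$
The hypothesis $\lambda\leq 2\rho$, combined with the fact that $-w_{0}$ permutes the simple roots and fixes $2\rho$, gives $2\rho-\lambda^{*}\in Q_{+}$, so writing $2\rho-\lambda^{*}=\sum_{j}n_{j}\alpha_{j}$ with $n_{j}\geq 0$ reduces the whole problem to verifying
$$\sum_{j}n_{j}\langle w_{1}^{-1}\alpha_{j},x_{P}\rangle\geq 0$$
for every $(w_{1},w_{2},w_{3},P)$ on the Belkale--Kumar list.

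This last step is where I expect the main difficulty to lie: individual inner products $\langle w_{1}^{-1}\alpha_{j},x_{P}\rangle$ can be of either sign, so there is no termwise non-negativity to fall back on, and the conclusion must emerge from the joint BK-regularity constraint on $(w_{1},w_{2},w_{3})$. This is precisely where the two remaining ingredients cited in the introduction come in. The Belkale--Kumar non-negativity theorem constrains the combinatorics of the $w_{i}^{-1}\rho$ for a BK-regular triple, and Proposition~9 of the Chiriv\`\i--Maffei appendix supplies the extra identity that lets one combine those constraints with the decomposition $2\rho-\lambda^{*}=\sum n_{j}\alpha_{j}$ to conclude the required inequality. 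Once membership in the eigencone is established, the saturation factor $d$ upgrades it to $[V(d\lambda^{*})\otimes V(d\rho)\otimes V(d\rho)]^{\mathfrak{g}}\neq 0$, which is the theorem; in the $\mathfrak{g}=sl_{n}$ case we may take $d=1$ by Knutson--Tao, yielding the refined final assertion.
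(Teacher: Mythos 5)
Your overall framework matches the paper's first step: reduce to showing that $(\rho,\rho,\lambda^*)$ lies in the saturated tensor cone, check the root-lattice condition, verify the Berenstein--Sjamaar/Belkale--Kumar inequalities indexed by maximal parabolics $P$ and Schubert triples, and then invoke the saturation factor (with $d=1$ for $sl_n$ by Knutson--Tao). The gap is in the verification of those inequalities, which is the heart of the matter. You propose to deduce $\langle\lambda^*,w_1x_P\rangle+\langle\rho,w_2x_P\rangle+\langle\rho,w_3x_P\rangle\le 0$ from the corresponding inequality for the Cartan-component triple $(2\rho,\rho,\rho)$; this requires (and is not merely ``equivalent to'') the termwise comparison $\langle 2\rho-\lambda^*,w_1x_P\rangle\ge 0$ for every $w_1$ occurring on the list, and that comparison is false. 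Already for $\mathfrak{g}=sl_2$, $P=B$, the triple $(w_1,w_2,w_3)=(s_1,e,s_1)$ satisfies $[\bar{\Lambda}_{s_1}]\cdot[\bar{\Lambda}_{e}]\cdot[\bar{\Lambda}_{s_1}]=[\bar{\Lambda}_{e}]$ (and is Levi-movable), yet for $\lambda=0\le 2\rho$ one has $\langle 2\rho-\lambda^*,s_1x_1\rangle=-2\rho(x_1)<0$, even though the inequality actually needed, $0+\rho(x_1)-\rho(x_1)\le 0$, holds. So subtracting the known $(2\rho,\rho,\rho)$ inequalities loses too much information, and your final paragraph --- which defers to ``the joint BK-regularity constraint'' plus Proposition 9 combined with the expansion $2\rho-\lambda^*=\sum_j n_j\alpha_j$ --- is precisely the missing argument rather than a sketch of it.

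What the paper does at this point is use the Belkale--Kumar non-negativity result quantitatively: for any triple $(u,v,w)\in(W^P)^3$ with nonzero cup product one has $(\chi_{w_0ww_0^P}-\chi_u-\chi_v)(x_P)\ge 0$, where $\chi_w=\rho-2\rho^L+w^{-1}\rho$, and since $\rho^L(x_P)=0$ and $w_0^Px_P=x_P$ this unwinds to
\begin{equation*}
(\rho+u^{-1}\rho+v^{-1}\rho+w^{-1}\rho)(x_P)\le 0 .
\end{equation*}
This is stronger than what membership of $(2\rho,\rho,\rho)$ gives (which would have $2w^{-1}\rho$ in place of $\rho+w^{-1}\rho$; note $\rho(x_P)\ge (w^{-1}\rho)(x_P)$), and that gain is exactly what is needed. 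Proposition 9 then enters not through the expansion of $2\rho-\lambda^*$ in simple roots, but in the form $\lambda=\rho+\beta$ with $\beta$ a weight of $V(\rho)$ (applicable to $\lambda^*$ as well, since the hypothesis $\lambda\le 2\rho$ is stable under $\lambda\mapsto\lambda^*$): this gives $\lambda^*(wx_P)=(w^{-1}\rho)(x_P)+(w^{-1}\beta)(x_P)\le(\rho+w^{-1}\rho)(x_P)$, because $w^{-1}\beta$ is again a weight of $V(\rho)$ and hence $\rho-w^{-1}\beta$ pairs non-negatively with $x_P$. Combining this bound with the displayed inequality yields $\rho(ux_P)+\rho(vx_P)+\lambda^*(wx_P)\le 0$, which is the required system of inequalities. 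To repair your write-up, replace the comparison with $(2\rho,\rho,\rho)$ by the Belkale--Kumar inequality above and use Proposition 9 as a statement about $\lambda$ itself.
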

\noindent
{\bf Acknowledgements.} I thank Corrado DeConcini who brought to my attention  Question (1). Partial support from NSF grant number
DMS- 1501094 is gratefully acknowledged.

\section{Proof of Theorem (3)}
We now prove Theorem (3).
\begin{proof} 
Let $\Gamma_{3}(\mathfrak{g})$ be the {\it saturated tensor semigrou}p defined by
$$
\Gamma_{3}(\g)=\{(\lambda,\mu,\nu)\in D^{3}:[V(N\lambda)\otimes V(N\mu)\otimes V(N\nu)]^{G}\neq 0\text{~~ for some~~ } N>0\}.
$$
To prove the theorem,  it suffices to prove that $(\rho,\rho,\lambda^{*})\in \Gamma_{3}(G)$, where $\lambda^{*}$ is the dual weight $-w_o\lambda$,
$w_o$ being the longest element of the Weyl group of $\mathfrak{g}$.
Let $G$ be the connected, simply-connected  complex algebraic
group with Lie algebra $\g$. Let $B$ (resp. $T$) be the  Borel subgroup (resp. maximal torus) of $G$ with Lie algebra $ \fb$ (resp. 
$ \ft$).  Let $W$ be the Weyl group of $G$. For any standard parabolic subgroup $P\supset B$ with Levi subgroup $L$ containing $T$, let $W^P$ be  the set of smallest length coset representatives in $W/W_L$, $W_L$ being the Weyl group of $L$. Then, we have the 
Bruhat decomposition:
$$G/P=\sqcup_{w\in W^P}\, \Lambda_w^P,\,\,\,\text{where}\,\, \Lambda^P_w:= BwP/P.$$
Let $\bar{\Lambda}_w$ denote the closure of $\Lambda_w$ in $G/P$.  We denote by $[\bar{\Lambda}_w]$ the Poincar\'e dual of its
fundamental class. Thus, $[\bar{\Lambda}_w]$ belongs to the singular cohomology:
$$[\bar{\Lambda}_w]\in H^{2(\text{dim}\,G/P- \ell(w))}(G/P, \mathbb{Z}),$$
where $\ell(w)$ is the length of $w$.

Let $\{x_{j}\}_{1\leq j\leq r} \subset \mathfrak{t}$ be the  dual to the simple roots $\{\alpha_{i}\}_{1\leq i\leq r}$, i.e., 
$$\alpha_{i}(x_{j})=\delta_{i,j}.$$ 

In view of [BS] (or [Ku; Theorem 10]), it suffices to prove that for any standard maximal parabolic subgroup $P$ of $G$ and triple $(u,v,w)\in (W^{P})^{3}$ such that the cup product of the corresponding  Schubert classes in $G/P$~:
\begin{equation}
[\bar{\Lambda}^{P}_{u}]\cdot [\bar{\Lambda}^{P}_{v}]\cdot [\bar{\Lambda}^{P}_{w}]=k[\bar{\Lambda}^{P}_{e}]\in H^{*}(G/P, \mathbb{Z}), \,\,\,\text{for some} \,\, k\neq 0, \label{eq0}
\end{equation}
the following inequality is satisfied:
\begin{equation}
\rho(ux_{P})+\rho(vx_{P})+\lambda^{*}(wx_{P})\leq 0.\label{eq*}
\end{equation}
Here, $x_{P}:=x_{i_{P}}$, where $\alpha_{i_{P}}$ is the unique  simple root not in the Levi of $P$. 

Now, by  [BK$_1$; Proposition 17(a)] (or [Ku; Corollary 22 and Identity (9)]), for any $u,v,w\in (W^P)^3$ such that  the equation \eqref{eq0}  is satisfied,
\begin{equation}
(\chi_{w_{o}ww_{o}^P}-\chi_{u}-\chi_{v})(x_{P})\geq 0,\label{eq1}
\end{equation}
where $w^{P}_{o}$ is the longest element in the Weyl group of $L$ and 
$$
\chi_{w}:=\rho-2\rho^{L}+w^{-1}\rho 
$$
($\rho^{L}$ being the half sum of positive roots in the Levi of $P$).

Now,
\begin{align}
&(\chi_{w_{o}ww_{o}^P}-\chi_{u}-\chi_{v})(x_{P})\notag\\
&=({\rho}-w^{P}_{o}w^{-1}\rho-{\rho}-u^{-1}\rho-\rho-v^{-1}\rho)(x_{P}),\text{~~ since~ } \rho^{L}(x_{P})=0\notag\\
&= (-\rho-u^{-1}\rho-v^{-1}\rho-w^{-1}\rho)(x_{P}),\text{~~ since~ }w^{P}_{o}(x_{P})=x_{P}.\label{eq2}
\end{align}
Combining \eqref{eq1} and \eqref{eq2}, we get
\begin{equation}
(\rho+u^{-1}\rho+v^{-1}\rho+w^{-1}\rho)(x_{P})\leq 0\,,\text{~~ if \eqref{eq0} is satisfied.}\label{eq3}
\end{equation}
We next claim that for any dominant integral weight $\lambda \leq 2 \rho$ and any $u,v,w\in (W^P)^3$,
\begin{equation}
\rho(ux_{P})+\rho(vx_{P})+\lambda^{*}(wx_{P})\leq (\rho+u^{-1}\rho+v^{-1}\rho+w^{-1}\rho)(x_{P}),\label{eq4}
\end{equation}
which is equivalent to
\begin{equation}
\lambda^{*}(wx_{P})\leq (\rho+w^{-1}\rho)(x_{P}).\label{eq5}
\end{equation}

Of course \eqref{eq3} and \eqref{eq4} together give \eqref{eq*}. So, to prove the theorem, it suffices to prove \eqref{eq5}. Since the assumption on $\lambda$ in the theorem is invariant under the transformation $\lambda\mapsto \lambda^{*}$, we can replace $\lambda^{*}$ by $\lambda$ in \eqref{eq5}.
By Proposition (9) in the appendix, $\lambda=\rho+\beta$, where $\beta$ is a weight of $V(\rho)$ (i.e.,  the weight space of $V(\rho)$ corresponding to the weight $\beta$ is nonzero). Thus, 
$$
\lambda(wx_{P})=\rho(wx_{P})+\beta(x_{P}),\text{~~ for some weight $\beta$ of $V(\rho)$}.
$$

Hence,
$$
\lambda(wx_{P})=\rho(wx_{P})+\beta(x_{P})\leq (w^{-1}\rho + \rho)(x_{P}).
$$

This establishes \eqref{eq5} and hence the theorem is proved.
\end{proof}

We recall the following conjecture due to Kapovich-Millson [KM$_1$] (or [Ku; Conjecture 47]).
\begin{conjecture} Let $\mathfrak{g}$ be a simple, simply-laced Lie algebra over $\mathbb{C}$. Then, $d=1$ is a saturation factor for
$\mathfrak{g}$.
\end{conjecture}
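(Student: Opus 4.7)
The plan is to reformulate the saturation statement as a lattice-point property of the eigencone of $\mathfrak{g}$ and then to attack it through a saturation-compatible combinatorial model for tensor product multiplicities. By the Berenstein-Sjamaar description invoked in the proof of Theorem (3), together with the Belkale-Kumar refinement, the real cone spanned by $\Gamma_3(\mathfrak{g})$ is rational polyhedral, cut out by the inequalities $\rho(ux_P)+\rho(vx_P)+\nu^*(wx_P)\leq 0$ indexed by maximal parabolics $P$ and triples $(u,v,w)\in (W^P)^3$ that are Levi-movable in the Belkale-Kumar deformed product. The conjecture asserts that for simply-laced $\mathfrak{g}$ every lattice point $(\lambda,\mu,\nu)\in D^3$ of this cone with $\lambda+\mu+\nu$ in the root lattice $Q$ already lies in $\Gamma_3(\mathfrak{g})$, i.e.\ gives a nonzero $\mathfrak{g}$-invariant at scale $N=1$. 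Since Knutson-Tao [KT] settled the case $\mathfrak{g}=\mathfrak{sl}_n$, only the types $D_r$, $E_6$, $E_7$, $E_8$ remain.

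One natural line of attack is via the Mirkovi\'c-Vilonen cycles in the affine Grassmannian of the Langlands dual group: by geometric Satake, the multiplicity $\dim[V(\lambda)\otimes V(\mu)\otimes V(\nu)]^{\mathfrak{g}}$ equals the number of irreducible components of a triple fiber product of semi-infinite orbits, parametrized by the stable MV polytopes of Kamnitzer and Anderson. The strategy would be to show that if a stable MV polytope corresponding to $(N\lambda,N\mu,N\nu)$ exists, then it admits a compression to a stable MV polytope of parameters $(\lambda,\mu,\nu)$, provided $\mathfrak{g}$ is simply-laced and the root-lattice condition on $\lambda+\mu+\nu$ is satisfied. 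The simply-laced hypothesis enters naturally here: only in that case is the MV polytope theory governed by Lusztig's canonical basis with integer structure constants, whereas outside the simply-laced case there are $2$-torsion phenomena that already account for the factor $d=2$ in types $B_r,C_r$ and would obstruct any naive compression.

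A parallel, more combinatorial, line of attack is to imitate the Knutson-Tao hive argument directly. For $\mathfrak{sl}_n$, saturation is proved by producing, from an integer hive of boundary $N(\lambda,\mu,\nu)$, an integer hive of boundary $(\lambda,\mu,\nu)$ by averaging followed by a local rounding. A plausible simply-laced analogue would replace hives by Berenstein-Zelevinsky patterns or by string polytopes attached to a reduced expression of the longest element of $W$. The essential step would be a scaling lemma asserting that the rational point $\tfrac{1}{N}p$ of such a polytope, with simply-laced data and parameters in the root lattice, is forced to be integral.

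The hard part, and the reason the conjecture remains open, is precisely this scaling lemma. For type $A$ the Knutson-Tao-Woodward theorem exhibits a strong integrality property of the hive polytope; for types $D$ and $E$ the analogous string polytopes are not known to enjoy such a rigid integral structure. The best existing bounds ($d=4$ for $D_r$ by Sam [S] and $d=36,144,3600$ for $E_6,E_7,E_8$ by Kapovich-Millson [KM$_1$, KM$_2$]) come from very different inputs --- branching to smaller groups and geometry of Euclidean buildings, respectively --- that cannot be sharpened to $d=1$ by the methods employed. A successful proof would therefore likely require either a new type-uniform combinatorial model across $A$, $D$, $E$ with provably integral vertices, or else a direct representation-theoretic construction that extracts an unscaled invariant in $V(\lambda)\otimes V(\mu)\otimes V(\nu)$ from one in $V(N\lambda)\otimes V(N\mu)\otimes V(N\nu)$, bypassing combinatorial models entirely. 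Of the available approaches, the MV polytope route seems the most promising, since it is the one place where the simply-laced hypothesis already has genuine combinatorial content.
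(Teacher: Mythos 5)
The statement you were asked to prove is Conjecture (4), which the paper attributes to Kapovich--Millson and explicitly leaves unproved: there is no argument for it anywhere in the paper, and Theorem (5) is stated conditionally on its validity. So there is no proof of record to compare yours against, and your submission, to its credit, recognizes this --- you say outright that the conjecture remains open. But as a consequence what you have written is not a proof. Your reformulation of saturation as the statement that every lattice point of the rational polyhedral cone cut out by the Berenstein--Sjamaar/Belkale--Kumar inequalities, with $\lambda+\mu+\nu$ in the root lattice, already lies in $\Gamma_3(\mathfrak{g})$ at $N=1$ is the standard and correct one, and your accounting of the known bounds ($d=1$ for $A_r$ by Knutson--Tao, $d=4$ for $D_r$ by Sam, $d=36,144,3600$ for $E_6,E_7,E_8$ by Kapovich--Millson) matches the paper. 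But neither of the two strategies you sketch is carried out: in the MV-polytope route you name the needed step (a ``compression'' of a stable MV polytope from parameters $N(\lambda,\mu,\nu)$ to $(\lambda,\mu,\nu)$) and in the hive-style route you name the needed ``scaling lemma'' for string or Berenstein--Zelevinsky polytopes, and in both cases you then observe that this is precisely the step nobody knows how to do. That is an honest assessment of the state of the art, not an argument.

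The concrete gap, then, is the entirety of the mathematical content: the integrality or compression step that would descend an invariant from scale $N$ to scale $1$ in types $D$ and $E$. I would also caution that your one substantive claim about \emph{why} the simply-laced hypothesis should help --- that only in the simply-laced case is the MV polytope theory ``governed by Lusztig's canonical basis with integer structure constants,'' with ``$2$-torsion phenomena'' accounting for $d=2$ in types $B_r$ and $C_r$ --- is not substantiated and is at best a heuristic: MV polytopes exist and are well behaved in all finite types, and the known saturation failures outside type $A$ are not currently explained by any torsion in that theory. In short: you have made no error of reasoning, but you have also proved nothing; the statement is an open conjecture and your text is a research program. For the purposes of this paper that is acceptable only in the role the author assigns it, namely as a hypothesis feeding the conditional Theorem (5).
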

The following theorem follows immediately by combining Theorem (3) and Conjecture (4).
\begin{theorem} For any  simple, simply-laced Lie algebra $\mathfrak{g}$ over $\mathbb{C}$, assuming the validity of Conjecture (4), Question (1) has an affirmative answer for $\mathfrak{g}$, i.e., for any dominant integral weight $\lambda \leq 2\rho$,  $V(\lambda)$ is a component of  $ V(\rho)\otimes V(\rho)$.

Thus, assuming the validity of Conjecture (4), Question (1) has  an affirmative answer for any simple $\mathfrak{g}$ of type 
$D_r (r\geq 4); E_6; E_7;$ and $E_8$ as well (apart from $\mathfrak{g}$ of type $A_r$ as in Theorem (3)).
\end{theorem}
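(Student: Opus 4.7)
The plan is to observe that this theorem is essentially a direct chaining of the two preceding results: Theorem (3) furnishes the inclusion $V(d\lambda) \subset V(d\rho)\otimes V(d\rho)$ for any saturation factor $d$, and Conjecture (4) asserts that $d=1$ qualifies as a saturation factor in the simply-laced case. So the proof should essentially be a one-line invocation of these two ingredients.

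More precisely, I would first fix a simple, simply-laced $\mathfrak{g}$ and a dominant integral weight $\lambda$ with $\lambda\leq 2\rho$. Applying Theorem (3) with $d$ taken to be a saturation factor for $\mathfrak{g}$, we obtain $V(d\lambda)\subset V(d\rho)\otimes V(d\rho)$. Under the hypothesis of Conjecture (4), the value $d=1$ is permissible, and substituting this yields the desired inclusion $V(\lambda)\subset V(\rho)\otimes V(\rho)$. This handles the first assertion.

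For the second assertion, I would simply note that the simple, simply-laced Lie algebras are exactly those of types $A_r$, $D_r$ ($r\geq 4$), $E_6$, $E_7$, and $E_8$. In type $A_r$, the result is already unconditional because Knutson--Tao [KT] established that $d=1$ is a saturation factor for $sl_n$, so Theorem (3) directly gives the conclusion without appealing to Conjecture (4); this is the ``In particular'' statement already recorded in Theorem (3). For the remaining simply-laced types $D_r, E_6, E_7, E_8$, the conclusion follows from the first part of the present theorem, conditional on Conjecture (4).

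There is essentially no obstacle: the content of the theorem is entirely absorbed into Theorem (3) and Conjecture (4), and the proof is a matter of correctly identifying that ``simply-laced'' picks out precisely the ADE types and that $A_r$ is already covered unconditionally. The only minor care needed is to state explicitly that the unconditional part for type $A_r$ is not an instance of assuming Conjecture (4) but rather the known Knutson--Tao saturation theorem feeding into Theorem (3).
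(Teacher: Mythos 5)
Your proposal is correct and matches the paper exactly: the paper states that this theorem ``follows immediately by combining Theorem (3) and Conjecture (4),'' which is precisely your chaining of Theorem (3) with $d=1$ supplied by Conjecture (4), plus the observation that the simply-laced types are $A_r, D_r, E_6, E_7, E_8$ with $A_r$ already covered unconditionally via Knutson--Tao.
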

\begin{remark}
{\em  By an explicit calculation using the program LIE, it is easy to see that Question (1) has an affirmative answer for 
simple $\mathfrak{g}$ of types $G_2$ and $F_4$ as well.}
\end{remark}

\newpage
 
\section{Appendix (due to R. Chiriv\`\i\ and A. Maffei)}

We follow the notation and assumptions from the Introduction. In particular, $\mathfrak{g}$ is a simple Lie algebra over $\mathbb{C}$.
Let $\{\omega_i\}_{i\in I}$ be the fundamental weights,  $\{\gra_i\}_{i\in I}$ the simple roots, and  $\{s_i\}_{i\in I}$ 
the simple reflections, where $I:=\{1\leq i \leq r\}$. For any  $J\subset I$, let
 $W_J$ be the parabolic subgroup of the Weyl group $W$ generated by
$s_j$ with $j\in J$ and let $\Phi_J$ be the root system generated by the
simple roots $\gra_j$ with $j\in J$. Set 
$$\Omega :=\bigoplus_{i\in
  I} \mR \omega_i;\,\,\Omega_J :=\bigoplus_{j\in J} \mR \omega_j ,$$
and let $ \pi_J:\Omega\lra \Omega_J $ be  the projection with kernel
$\Omega_{I\senza J}$. The projection $\pi_J(\Phi_J)$ of the roots in
$\Phi_J$ gives a root system whose fundamental weights are given by
$\{\omega_j\st j\in J\}$.

\medskip

Let $A\subset \mathfrak{t}^*$ be the dominant cone, $B\subset \mathfrak{t}^*$ the cone generated by $\{-\gra_i:i\in I\}$
and $C:=2\rho+B$.  We want to describe the vertices of the
polytope $A\cap C$. For $J\subset I$ define 
$$
A_J:=\mR_{\geq 0}[\omega_j\st j\in J] ,\,\,\, B_J :=\mR_{\geq 0}[-\gra_j\st j\in J] \; \mand \; C_J:=2\rho+B_J .
$$ 

The sets $A_J$ and $B_J$ are the faces of $A$ and $B$.
The vertices of the polytope $A\cap C$ are given by the zero
dimensional nonempty intersections of the form $A_J\cap C_H$.

For any $J\subset I$, let $b_J:=\sum_{\gra\in\Phi^+_J}\gra$ and $c_J:=2\rho-b_J$. 
All these points are different. Moreover, $c_I=0$ and $c_\emptyset=2\rho$.

\begin{lemma}
For each $J\subset I$, we have
$$
A_{I\senza J}\cap C_J=\{c_J\}.
$$
Moreover, none of  the other intersections $A_H\cap C_K$  give a single point.
\end{lemma}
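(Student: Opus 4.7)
The plan is to (i) verify $c_J\in A_{I\setminus J}\cap C_J$ by a direct coroot-pairing computation and then deduce uniqueness from a direct-sum decomposition of $\Omega$; and (ii) handle the ``moreover'' statement by splitting the remaining pairs $(H,K)$ into those with $H\cup K\ne I$ (which will force an empty intersection) and those with $H\cup K=I$, $H\cap K\ne\emptyset$ (in which I will exhibit two distinct $c$-vertices inside $A_H\cap C_K$).

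For (i), membership of $c_J=2\rho-b_J$ in $C_J$ is immediate since $b_J=\sum_{\alpha\in\Phi_J^+}\alpha$ is a nonnegative combination of $\{\alpha_j:j\in J\}$. Membership in $A_{I\setminus J}$ I would check by evaluating at each simple coroot: using $b_J=2\rho_J$ (twice the half-sum of positive roots of $\Phi_J$) gives $b_J(\alpha_k^\vee)=2$, hence $c_J(\alpha_k^\vee)=0$, for $k\in J$; while for $k\notin J$ every $\alpha\in\Phi_J^+$ is a nonnegative combination of $\{\alpha_j:j\in J\}$, so the nonpositivity of off-diagonal Cartan entries forces $b_J(\alpha_k^\vee)\le 0$ and hence $c_J(\alpha_k^\vee)\ge 2$. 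In particular $c_J$ lies in the relative interior of $A_{I\setminus J}$. Uniqueness then follows from the invertibility of the Cartan matrix of $\Phi_J$: the projection $\pi_J$ restricts to an isomorphism of $V_J:=\mathrm{span}(\alpha_j:j\in J)$ onto $\Omega_J$, yielding the direct sum $\Omega=\Omega_{I\setminus J}\oplus V_J$, so the affine flats $\Omega_{I\setminus J}$ and $2\rho+V_J$ meet in exactly one point, which must be $c_J$.

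For (ii), any pair $(H,K)\ne(I\setminus J,J)$ satisfies $H\cup K\ne I$ or $H\cap K\ne\emptyset$. If $H\cup K\ne I$, I would pick $i_0\in I\setminus(H\cup K)$: then $v\in A_H$ forces $v(\alpha_{i_0}^\vee)=0$ since $i_0\notin H$, while writing $v=2\rho-\sum_{j\in K}t_j\alpha_j\in C_K$ with $t_j\ge 0$ yields $v(\alpha_{i_0}^\vee)=2-\sum_{j\in K}t_j a_{i_0,j}\ge 2$ by the same off-diagonal Cartan nonpositivity; so $A_H\cap C_K=\emptyset$. In the remaining case $H\cup K=I$ and $H\cap K\ne\emptyset$, I would invoke part (i) with $J_1:=I\setminus H$ and $J_2:=K$ to place both $c_{J_1}$ and $c_{J_2}$ inside $A_H\cap C_K$: the inclusions $J_1\subseteq K$ and $I\setminus K\subseteq H$ (both consequences of $H\cup K=I$) give $c_{J_1}\in A_H\cap C_{J_1}\subseteq A_H\cap C_K$ and $c_{J_2}\in A_{I\setminus K}\cap C_K\subseteq A_H\cap C_K$. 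The hypothesis $H\cap K\ne\emptyset$ forces $J_1\ne J_2$, and since the $c_J$'s are pairwise distinct (stated above the lemma), $c_{J_1}\ne c_{J_2}$; convexity then forces $\dim(A_H\cap C_K)\ge 1$.

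The delicate point is the final case. A bare affine-span dimension count only shows that the affine hull of $A_H\cap C_K$ is positive-dimensional, which a priori is still compatible with the cone intersection pinching to a single tangential point. Locating two concrete $c$-vertices inside $A_H\cap C_K$, rather than trying to produce a general tangent direction, is what I expect to give the cleanest argument.
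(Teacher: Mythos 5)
Your proposal is correct and follows essentially the same route as the paper: the coroot evaluations $c_J(\alpha_k^\vee)=0$ for $k\in J$ and $\ge 2$ for $k\notin J$ are the paper's expansion $b_J=2\sum_{j\in J}\omega_j+\sum_{\ell\notin J}a_\ell\omega_\ell$ with $a_\ell\le 0$; your complementary-flats argument $\Omega=\Omega_{I\senza J}\oplus V_J$ is the paper's injectivity of $\pi_J$ on the span of $\{\alpha_j\}_{j\in J}$; and your two cases for the ``moreover'' part (emptiness when $H\cup K\neq I$, and the two distinct vertices $c_{I\senza H}$, $c_K$ when $H\cup K=I$, $H\cap K\neq\emptyset$) are exactly the paper's dichotomy $K\supseteq I\senza H$ versus strict containment. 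No gaps.
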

\begin{proof}
Observe  that 
$$b_J=2\sum_{j\in J}\omega_j + \sum_{\ell\notin
J}a_\ell \omega_\ell, \,\,\,\text{where}\,\, a_\ell \leq 0.$$ Hence, $c_J\in A_{I\senza J}\cap C_J$.

Consider now an intersection of the form $A_{I\senza H}\cap
C_K$. Assume it is not empty and that $y=2\rho - x \in A_{I\senza H}\cap C_K$. Then, 
$x=2\sum_{h\in H}\omega_h +\sum_{\ell\notin
H}a'_\ell \omega_\ell$. Now, notice that if $h\notin K$, the
coefficient of $\omega_h$ in $x$ can not be positive. So, we must have
$K\supset H$.  If $K\supset H$ and $K\neq H$, then
$$ A_{I\senza H}\cap C_K \supset \big(A_{I\senza H}\cap
C_H\big)\cup\big(A_{I\senza K}\cap C_K\big)\supset \{c_H,c_K\}.
$$
Hence,  it is not a single point. 

It remains to prove that $A_{I\senza J}\cap C_J\subset\{c_J\}.$ Let
$y=2\rho-x$ as before.  Notice that $\pi_J(x) = 2\sum_{j\in
J}\omega_j$ and $\pi_J(x)=\sum_{\gra\in\Phi_J^+}\pi_J(\gra)$. Since $\pi_J$ is
injective on $B_J$, we must have $x=b_J$ and the claim follows.
\end{proof}

We have the following Corollary.

\begin{corollary}
The intersection $A\cap C$ is the convex hull of the points $\{c_J\st J\subset I\}$.
\end{corollary}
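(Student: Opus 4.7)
The plan is to reduce the Corollary to the classification of vertices already supplied by the preceding Lemma. Since any bounded convex polytope equals the convex hull of its vertices, it will suffice to (i) verify that $A\cap C$ is indeed a bounded polytope, and (ii) identify its vertex set with $\{c_J:J\subset I\}$.

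First I would confirm that $A\cap C$ is a bounded polyhedron. That it is polyhedral is immediate, since it is the intersection of the finitely many closed half-spaces cutting out $A$ and $C$. Boundedness follows from the observation that any $y=\sum_i a_i\omega_i=2\rho-\sum_j c_j\gra_j\in A\cap C$ has $a_i,c_j\geq 0$; pairing with a strictly dominant coweight such as $\rho^\vee$ bounds $\sum_j c_j\,\rho^\vee(\gra_j)$ by $\rho^\vee(2\rho)$, which in turn bounds the $\gra_j$-coefficients (and hence the $\omega_i$-coefficients) in a compact interval.

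Next I would invoke the standard correspondence between vertices of a polytope and zero-dimensional intersections of faces of its defining cones: a point of $A\cap C$ is a vertex if and only if it is the unique element of some $A_H\cap C_K$, where $A_H$ is a face of $A$ and $C_K$ a face of $C$. The preceding Lemma has enumerated these zero-dimensional intersections precisely, namely as the singletons $A_{I\senza J}\cap C_J=\{c_J\}$ for $J$ ranging over subsets of $I$. Consequently the vertex set of $A\cap C$ equals $\{c_J:J\subset I\}$, and the Minkowski-Weyl theorem delivers $A\cap C=\mathrm{conv}\{c_J:J\subset I\}$.

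I do not anticipate any genuine obstacle: the combinatorial content sits in the Lemma, and what remains is a routine application of convex geometry. The only step demanding any care is the boundedness check, which rests on the regularity of $\rho$; once that is in place, the Corollary is essentially a bookkeeping consequence of the face enumeration already carried out.
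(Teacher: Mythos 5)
Your argument is correct and matches the paper's (implicit) reasoning: the paper likewise treats $A\cap C$ as a bounded polytope whose vertices are the zero-dimensional intersections of faces $A_H\cap C_K$, so the Corollary follows immediately from the enumeration in the Lemma. You merely make explicit the boundedness check (via pairing with a regular coweight) and the standard convex-geometry facts that the paper leaves unstated.
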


We now prove the following main result of this Appendix.

\begin{proposition}  Let $\lambda\leq 2 \rho$ be  a dominant integral weight. Then, 
$$\lambda=\rho + \beta,$$
for some weight $\beta$  of  $V(\rho)$. 
\end{proposition}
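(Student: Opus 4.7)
The plan is to rewrite the conclusion as the single statement ``$\beta := \lambda - \rho$ is a weight of $V(\rho)$'' and then verify this directly using two standard criteria: (i) $\beta$ belongs to the convex hull of the Weyl orbit $W\rho$, and (ii) $\rho - \beta$ lies in the root lattice $Q$. Criterion (ii) reads $2\rho - \lambda \in Q$, which is immediate from the hypothesis $\lambda \leq 2\rho$ in the Bruhat--Chevalley order, since this means $2\rho - \lambda$ is a non-negative integer combination of simple roots. The entire problem therefore reduces to verifying (i).

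For (i), I would invoke Corollary~8: the polytope $A\cap C$ is the convex hull of the finitely many points $\{c_J : J\subset I\}$. Since $\lambda$ is dominant with $\lambda \leq 2\rho$, we have $\lambda \in A\cap C$, and consequently $\lambda - \rho$ lies in the convex hull of the translated vertices $\{c_J - \rho : J\subset I\}$. It thus suffices to prove $c_J - \rho \in W\rho$ for every $J \subset I$.

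The crucial identification is $c_J - \rho = w_J\rho$, where $w_J$ denotes the longest element of the parabolic subgroup $W_J$. This follows from the classical identity
$$
\rho - w_J\rho \;=\; \sum_{\alpha \in \Phi_J^+} \alpha \;=\; b_J,
$$
which in turn rests on the fact that $w_J$ sends $\Phi_J^+$ bijectively to $\Phi_J^-$ while setwise permuting $\Phi^+\senza \Phi_J$ (the coefficient of each $\gra_i$, $i\notin J$, is preserved by every $s_j$ with $j\in J$). Substituting $c_J = 2\rho - b_J$ then yields $c_J - \rho = \rho - b_J = w_J\rho \in W\rho$. I would expect this step -- the bridge between the purely combinatorial description of the vertices $c_J$ via subsets $J\subset I$ and the representation-theoretic Weyl orbit $W\rho$ -- to be the only non-routine ingredient; everything else is formal.

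Combining the two pieces, $\lambda - \rho$ lies in the convex hull of $\{w_J\rho\}_{J\subset I} \subset W\rho$, so criterion (i) holds. Together with (ii) and the integrality of $\lambda - \rho$, the standard characterization of the weight set of an irreducible highest-weight module (an integral $\mu$ with $\rho - \mu \in Q$ is a weight of $V(\rho)$ iff $\mu$ lies in the convex hull of $W\rho$) gives that $\lambda - \rho$ is a weight of $V(\rho)$, completing the proof.
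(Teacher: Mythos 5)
Your proposal is correct and follows essentially the same route as the paper's proof: characterize the weights of $V(\rho)$ as $(\rho+Q)\cap H_\rho$, use Corollary~8 to reduce to the vertices $c_J$, and identify $c_J-\rho=\rho-b_J$ with $w^J_o(\rho)\in W\rho$. The only (inessential) difference is how the identity $\rho-b_J=w^J_o(\rho)$ is verified: you use the standard fact that $w^J_o$ sends $\Phi_J^+$ to $\Phi_J^-$ and permutes $\Phi^+\senza\Phi_J$, while the paper argues via injectivity of $\pi_J$ on $B_J$ and reduction to $J=I$; both are valid.
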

\begin{proof} 
Let $Q\subset \mathfrak{t}^*$ be the root lattice (generated by the simple roots) and let $H_\rho$ be the
convex hull of the weights $\{w(\rho)\st w\in W\}$.  Recall that
the weights of the module $V (\rho)$ are precisely  the elements of the
intersection
$$(\rho+Q)\cap H_\rho.$$ If $\grl$ is as  in the proposition, then it is
clear that $\grl-\rho\in \rho+Q$. So, we need to prove that it belongs
to $H_\rho$. To check this, it is enough to check that $(A\cap
C)-\rho\subset H_\rho$ or equivalently that
$$ c_J-\rho \in H_\rho , \;\mforall\; J\subset I. $$ We 
have $$c_J-\rho=\rho-b_J=w^J_o(\rho)\in H_\rho ,$$ where $w^J_o$ is the longest
element in the parabolic subgroup $W_J$.  Indeed, to prove the last equality,
observe that $\rho-w^J_o(\rho)$
is a sum of roots $\gra_j$ with $j\in J$. So, since $\pi_J$ is
injective on $B_J$, it is enough to check that
$\pi_J(\rho-w^J_o(\rho))=\pi_J(b_J)$. Hence, we are reduced to study the
case in which $J=I$, for which we have $w^I_o(\rho)=-\rho$ and
$\rho-w^I_o(\rho)=2\rho=b_I$.
\end{proof}

\vskip5ex

Addresses:
\vskip1ex
Shrawan Kumar: Department of Mathematics, University of North Carolina, Chapel Hill, NC 27599-3250, USA (shrawan@email.unc.edu)
\vskip1ex

Rocco Chiriv\`\i:  Department of Mathematics and Physics, 
Universit\`a del Salento, Lecce, Italy (rocco.chirivi@unisalento.it)
\vskip1ex
Andrea Maffei: Dipartimento di Matematica, Universit\`a di Pisa, Pisa, Italy (maffei@dm.unipi.it)


\begin{thebibliography}{99}

\bibitem[BK$_1$] {bk1} P. Belkale and S. Kumar, Eigenvalue problem and a new product in cohomology of flag varieties, {\it Invent. Math.} {\bf 166} (2006), 185-228.

\bibitem[BK$_2$]{Belkale-kumar2}
P. Belkale and  S.  Kumar,  Eigencone, saturation and Horn problems for symplectic
 and odd orthogonal groups,  {\em J. Alg. Geom.} {\bf 19} (2010),
 199--242.

\bibitem[BS]{BerensteinSjamaar}
A. Berenstein and R. Sjamaar, Coadjoint orbits, moment
polytopes, and the Hilbert-Mumford criterion,  {\em J. Amer. Math.
Soc.} {\bf 13} (2000),   433--466.

\bibitem[HS]{HS}
J. Hong and  L. Shen, Tensor invariants, saturation problems, and Dynkin
automorphisms, Preprint (2015).

\bibitem[KM$_1$]{KapovichMillson1}
M. Kapovich and  J. J. Millson, Structure of the tensor
product semigroup, {\em Asian J. of Math.} {\bf 10} (2006),
492--540.

\bibitem[KM$_2$]{KapovichMillson2}
M. Kapovich and  J. J. Millson,  A path model for geodesics
in Euclidean buildings and its applications to representation
theory,  {\it Groups, Geometry and Dynamics} {\bf 2} (2008),
405--480.

\bibitem[KT]{KnutsonTao}
A. Knutson and T. Tao,  The honeycomb model of $\GL_n({\mathbb C})$ tensor
products I: Proof of the saturation conjecture, {\em J. Amer. Math.
Soc.} {\bf 12} (1999),  1055--1090.

\bibitem[Ko]{ko}
B. Kostant, Clifford algebra analogue of the Hopf-Koszul-Samelson theorem, the $\rho$-decomposition, $C(\mathfrak{g})=\End V_\rho\otimes C(P)$, and the $\mathfrak{g}$-module structure of $\wedge \mathfrak{g}$, {\em Adv.  Math. } {\bf 125} (1997), 275--350.

\bibitem[Ku] {ku2} S. Kumar, A survey of the additive eigenvalue problem (with appendix by M. Kapovich),
{\em Transformation Groups} {\bf 19} (2014), 1051--1148.

\bibitem[S]{S}
S. Sam, Symmetric quivers, invariant theory, and saturation theorems for the
classical groups, {\em Adv. Math.}
 {\bf 229} (2012), 1104--1135.



\end{thebibliography}
\end{document}